\documentclass[12pt,reqno]{amsart}
\usepackage{amsmath}
 \usepackage[latin1]{inputenc}
\usepackage{amssymb , amsmath, amsthm}

\usepackage{latexsym}
\usepackage[dvips]{graphicx}
\usepackage{ulem}

\usepackage{color}

\newcommand{\R}{\mathbb R}

\newcommand{\C}{\mathbb C}

\newtheorem{theorem}{Theorem}[section]

\newtheorem{definition}{Definition}[section]

\newtheorem{proposition}{Proposition}[section]
\newtheorem{remark}{Remark}[section]

\title[Placement of an obstacle ]
{On the placement of an obstacle
so as to optimize the
Dirichlet heat trace}

\author[Ahmad El Soufi and Evans M. Harrell II]{}

\keywords{ Dirichlet Laplacian, eigenvalues, heat trace, determinant, obstacle, spherical shell.}
\subjclass[2000]{35P15, 49R50, 58J50}

\email{elsoufi@univ-tours.fr}
\email{harrell@math.gatech.edu}

\begin{document}
\maketitle

\centerline{\scshape Ahmad El Soufi }
\medskip
{\footnotesize
  \centerline{Laboratoire de Math\'ematiques et Physique Th\'eorique,
UMR CNRS 7350} 
   \centerline{Universit\'e Fran\c{c}ois Rabelais de Tours, Parc de Grandmont, F-37200
Tours, France}}

\medskip

\centerline{\scshape Evans M. Harrell II}
\medskip
{\footnotesize
  \centerline{School of Mathematics}
\centerline{Georgia Institute of Technology, 
Atlanta GA 30332-0160, USA} }
   
\bigskip

\begin{abstract}
We prove that among all doubly connected domains of $\R^n$ bounded by two spheres of given radii, 
$Z(t)$, the trace of the heat kernel with Dirichlet boundary conditions, achieves its minimum when the spheres are concentric (i.e., for the spherical shell).  
The supremum is attained when the interior sphere is in contact with the outer sphere.

This is shown to be a special case of a more general 
theorem characterizing the optimal placement of a spherical obstacle inside a 
convex domain so as to maximize or minimize
the trace of the Dirichlet heat kernel.  In this case the minimizing position of the center of the obstacle belongs to  
the ``heart'' of the domain,
while the maximizing situation occurs 
either in the interior of the heart or
at a point where the obstacle is in contact with the outer boundary. 

Similar statements hold for the optimal positions of the obstacle
for any spectral property that can be obtained as 
a positivity-preserving or positivity-reversing transform of $Z(t)$,
including the spectral zeta function and, through it, the regularized determinant.

\end{abstract}

\section {Introduction and statement of results}\label{1}
Let $\Omega\subset \R^n$ be a bounded 
$C^2$
Euclidean domain and let 
$$\lambda_1(\Omega)  <  \lambda_2(\Omega)  \le \lambda_3(\Omega) \le 
 \cdots \le \lambda_i(\Omega) \le \cdots\rightarrow \infty ,$$
be the sequence of eigenvalues of the Dirichlet realization of the Laplacian $-\Delta$ in $\Omega$, where each eigenvalue is repeated according to its multiplicity. The corresponding ``heat operator" $e^{t\Delta}$ has finite trace for all $t>0$ (known in physical literature as the ``partition function''), 
which we denote
\begin{equation}\label{Zdef}
 Z_{\Omega}(t) = \sum_{k\ge 1} e^{- \lambda_k(\Omega) t}.
\end{equation}
Let $\zeta_{\Omega}$ be the zeta function,
defined as the meromorphic extension to the entire complex plane of 
 $ \sum_{k=1}^{\infty}\lambda_k(\Omega)^{-s}$,
which is known to be convergent and holomorphic on $\{\mathrm{Re} \ s >\frac n 2 \} $.
Following \cite{RS}, we
denote by $\det(\Omega)$ the regularized determinant of the Dirichlet Laplacian in $\Omega$ defined by 
\begin{equation}\label{detdef}
\det(\Omega)=\exp \left(-\zeta_{\Omega}'(0)\right).
\end{equation}

\smallskip

Eigenvalue optimization problems date from Rayleigh's ``Theory of Sound'' (1877), where it was suggested that the disk should minimize the first eigenvalue $\lambda_1$ among all 
planar domains of given measure. Rayleigh's conjecture was proved in any dimension
independently by Faber \cite{F} and Krahn \cite{Kr}.

\smallskip

Later, Luttinger \cite{Lut} proved an isoperimetric result analogous to 
Faber-Krahn for $Z(t)$, considered as a
functional 
on
the set of bounded Euclidean domains, that is, for any bounded domain $\Omega\subset
\mathbb{R}^n$ and any $t>0$,
Luttinger showed that
$$Z_{\Omega}(t)\le Z_{\Omega^*}(t),$$ 
where $\Omega^*$ is a Euclidean ball whose volume is equal to that of $\Omega$.  A similar property was proved in \cite{OPS} for the regularized determinant of the Laplacian in two dimensions (see \cite {LaMo, R} for other examples of  results in this direction).

\smallskip

The case of
multiply connected
planar domains, i.e. whose boundary admits more than one component, was first considered by Hersch. Using the method of interior parallels,  in  \cite{H2} Hersch proved  the following extremal property of annular membranes:\\ ``\emph{A doubly connected fixed membrane, bounded by two circles of given radii, has maximum $\lambda_1$ when the circles are concentric}''. 
 
 \smallskip
 
Hersch's result has been extended to a wider class of domains in any dimension by Harrell, Kr\"oger and Kurata \cite{HKK} and  Kesavan \cite{K}, 
whereby the authors consider a fixed domain $D$ from which an ``obstacle'' of fixed shape, usually spherical, has been
excised.   The position of the obstacle is allowed to vary, and the problem is to maximize or minimize $\lambda_1$. 
The critical assumption on the domain $D$ in  \cite{HKK} is an ``interior symmetry property,'' and with this assumption  
the authors further proved that, for the special case of two balls, $\lambda_1$ decreases when the center of
the small ball (the obstacle) moves away from the center of the large ball,
using a technique of domain reflection.  For a wider class of domains containing obstacles, it was shown
in \cite{HKK} that 
the maximizing position of the obstacle resides in a special subset of $D$, 
which in the case where $D$ is convex corresponds to what has later come to be called the {\em heart} of $D$ in \cite{BrMa, BrMaSa},
denoted
$\heartsuit(D)$ (see the definition below).
El Soufi and Kiwan \cite {EK1, EK2} have moreover proved other extensions of Hersch's result including one   valid for the second eigenvalue $\lambda_2$.  

\smallskip

The main aim of this paper is to establish a Hersch-type extremal result for the heat trace,
the spectral zeta function, and the determinant of the Laplacian, as well as
suitable generalizations for more general outer domains.  We begin by stating the special case of domains bounded by balls:
Given two positive 
numbers $R>r$ and a point ${\bf x}\in \R^n$, $|{\bf x}|<R-r$,  we denote by $\Omega({\bf x})$ the domain of $\R^n$ obtained by removing the ball  $B({\bf x},r)$ of radius $r$ centered at ${\bf x}$ from within the ball of radius $R$ centered at the origin. 

\begin{theorem}\label{ball}
{Let $\Omega({\bf x})$ be the domain bounded by balls as in the preceding paragraph.}

\smallskip

(i) For every  $t>0$, the heat trace $Z_{\Omega({\bf x})}(t)$ 
is nondecreasing
as the point ${\bf x}$ moves from the origin {directly} towards the boundary of the larger ball. 
In particular, $Z_{\Omega({\bf x})}(t)$ is minimal when
the balls are concentric (${\bf x}=O$) and maximal 
when the small ball is in contact with
the boundary of the larger ball ($|{\bf x}|=R-r$).

\smallskip

(ii)  For every  $s>0$, the zeta function   $\zeta_{\Omega({\bf x})}(s)$ increases as the point ${\bf x}$ moves from the origin {directly} towards the boundary of the larger ball. 
In particular, $\zeta_{\Omega({\bf x})}$ is minimal when the balls are concentric  and  maximal
in the limiting situation when the small ball approaches
the boundary of the larger ball.

\smallskip
(iii) The determinant of the Laplacian  $\det(\Omega({\bf x}))$ decreases as the point ${\bf x}$ moves from the origin {directly} towards the boundary of the larger ball. 
In particular, $\det(\Omega({\bf x}))$  is maximal when the balls are concentric  and  minimal
in the limiting situation when the small ball approaches
the boundary of the larger ball.

\end{theorem}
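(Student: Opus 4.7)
\emph{Overall strategy.} The three parts are linked: (i) is the core, and (ii), (iii) follow via positivity-preserving Mellin-type integral transforms. For (i) the rotational invariance of $B(0,R)$ implies that $Z_{\Omega(\mathbf{x})}(t)$ depends only on $a := |\mathbf{x}|$, so it suffices to show that $a \mapsto Z_{\Omega(a\mathbf{e}_1)}(t)$ is nondecreasing on $[0, R-r)$. I would establish this by proving the infinitesimal inequality $\frac{\mathrm{d}}{\mathrm{d}a} Z_{\Omega(a\mathbf{e}_1)}(t) \ge 0$ for $a \in (0, R-r)$, with $a = 0$ being a critical point of $Z$ by rotational symmetry.

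\emph{Part (i): Hadamard variation and moving planes.} The Hadamard formula gives
\[
\frac{\mathrm{d}}{\mathrm{d}a} Z_{\Omega(a\mathbf{e}_1)}(t) = -t \int_{\partial B(a\mathbf{e}_1, r)} h(t, y)\, (\hat r(y) \cdot \mathbf{e}_1)\, \mathrm{d}S(y),
\]
where $\hat r(y) := (y - a\mathbf{e}_1)/r$ is the outward unit normal of the obstacle (pointing into $\Omega(a\mathbf{e}_1)$) and $h(t, y) := \sum_{k \ge 1} e^{-\lambda_k(\Omega(a\mathbf{e}_1)) t} |\nabla \phi_k(y)|^2$. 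Pairing each point on the ``far'' hemisphere of the obstacle with its image under the reflection $\sigma_{H_a}$ across the hyperplane $H_a := \{y_1 = a\}$ through the obstacle centre recasts this as
\[
\frac{\mathrm{d}}{\mathrm{d}a} Z_{\Omega(a\mathbf{e}_1)}(t) = -t \int_{\partial B(a\mathbf{e}_1, r) \cap H_a^+} \bigl[h(t, y) - h(t, \sigma_{H_a}(y))\bigr] (\hat r(y) \cdot \mathbf{e}_1)\, \mathrm{d}S(y),
\]
so the claim reduces to the pointwise comparison $h(t, y) \le h(t, \sigma_{H_a}(y))$ on the far hemisphere. The key geometric identity
\[
|\sigma_{H_a}(y)|^2 = |y|^2 - 4a(y_1 - a)
\]
implies $\sigma_{H_a}(B(0, R) \cap H_a^+) \subsetneq B(0, R) \cap H_a^-$; combined with $\sigma_{H_a}$ fixing the obstacle, this yields the reflection-inclusion $\sigma_{H_a}(\Omega(a\mathbf{e}_1) \cap H_a^+) \subsetneq \Omega(a\mathbf{e}_1) \cap H_a^-$, which is the standard hypothesis of the moving-planes method. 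From this I would deduce the pointwise kernel inequality $p_{\Omega(a\mathbf{e}_1)}(t, y, y) \le p_{\Omega(a\mathbf{e}_1)}(t, \sigma_{H_a}(y), \sigma_{H_a}(y))$ for every interior point $y \in \Omega(a\mathbf{e}_1) \cap H_a^+$ via a reflection coupling of Brownian bridges; dividing each side by the squared distance to $\partial \Omega$ along the normal and letting $y$ tend to the obstacle surface then delivers the required $h$-inequality.

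\emph{Parts (ii) and (iii).} For $\mathrm{Re}(s) > n/2$,
\[
\zeta_\Omega(s) = \frac{1}{\Gamma(s)} \int_0^\infty t^{s-1} Z_\Omega(t)\, \mathrm{d}t.
\]
The Minakshisundaram-Pleijel heat-kernel coefficients of $\Omega(\mathbf{x})$ are integrals of local curvature invariants over $\partial B(0, R) \cup \partial B(\mathbf{x}, r)$ and depend only on $R$ and $r$, hence are independent of $\mathbf{x}$. Consequently $Z_{\Omega(\mathbf{x}_2)}(t) - Z_{\Omega(\mathbf{x}_1)}(t) = o(t^N)$ as $t \to 0^+$ for every $N$, so $\int_0^\infty t^{s-1}(Z_{\Omega(\mathbf{x}_2)} - Z_{\Omega(\mathbf{x}_1)})(t)\, \mathrm{d}t$ converges for every $s \in \C$ and the Mellin identity extends by analytic continuation to the difference $\zeta_{\Omega(\mathbf{x}_2)} - \zeta_{\Omega(\mathbf{x}_1)}$. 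Since $\Gamma(s)^{-1} > 0$ on $(0, \infty)$, part (i) immediately yields part (ii). For part (iii), $\Gamma(s)^{-1} \sim s$ as $s \to 0$ gives
\[
\bigl(\zeta_{\Omega(\mathbf{x}_2)} - \zeta_{\Omega(\mathbf{x}_1)}\bigr)'(0) = \int_0^\infty t^{-1}\bigl(Z_{\Omega(\mathbf{x}_2)} - Z_{\Omega(\mathbf{x}_1)}\bigr)(t)\, \mathrm{d}t \ge 0,
\]
which via $\det(\Omega) = \exp(-\zeta_\Omega'(0))$ is exactly the claimed reversed monotonicity.

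\emph{Main obstacle.} The crux is the pointwise heat-kernel comparison in part (i). The moving-planes argument used for $\lambda_1$ in \cite{HKK, K} hinges on the positivity of the first Dirichlet eigenfunction and cannot be summed mode by mode, since higher eigenfunctions change sign. The argument must instead be carried at the heat-kernel level, and the Brownian-bridge reflection coupling -- in particular the treatment of bridges that cross $H_a$, where the geometric inclusion is only one-sided -- is where I expect the bulk of the technical work to lie.
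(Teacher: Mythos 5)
Your proposal follows essentially the same route as the paper. The paper's proof of Theorem~\ref{ball} reduces to Proposition~\ref{det}: the Hadamard-type formula \eqref{hadamard-heat} converts the derivative of $Z$ under the translation to a boundary integral over the obstacle (with $\Delta K(t,{\bf x},{\bf x}) = 2\sum_k e^{-\lambda_k t}|\nabla u_k|^2$, which is exactly your $h$); reflection across the hyperplane $P$ through the obstacle centre rewrites that integral over $(\partial B)_s$ of the difference $\Delta K(t,{\bf x},{\bf x})-\Delta K(t,{\bf x}^*,{\bf x}^*)$; and the sign of this difference follows from showing $\phi(t,{\bf x},{\bf y}) := K(t,{\bf x},{\bf y})-K(t,{\bf x}^*,{\bf y}^*)\le 0$ on $\Omega_s\times\Omega_s$. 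Where you propose a Brownian-bridge reflection coupling for the kernel comparison, the paper instead applies the parabolic maximum principle twice (once in ${\bf y}$, then by symmetry in ${\bf x}$), which is the deterministic counterpart of your coupling and sidesteps the bridge-crossing bookkeeping you yourself flag as the technical crux. Where you pass from the interior inequality $\phi(t,{\bf y},{\bf y})\le 0$ to the boundary-flux inequality by dividing by $d({\bf y})^2$ and taking a limit, the paper uses Fermi coordinates and the fact that $\phi$, vanishing with its first normal and tangential derivatives at $\rho=0$ and being nonpositive inside, has nonpositive second normal derivative there; this requires the quadratic vanishing, which the paper justifies with uniform gradient bounds on Dirichlet eigenfunctions \cite{Gri,HaTa}. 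Parts (ii) and (iii) are also handled the same way, by subtracting the ${\bf x}$-independent short-time singular part of $Z$ and passing the sign through the Mellin transform.

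There is, however, one genuine gap. Statements (ii) and (iii) of the theorem (and Proposition~\ref{det}) assert \emph{strict} monotonicity of $\zeta$ and $\det$, while your argument delivers only $Z_{\Omega({\bf x}_2)}(t)\ge Z_{\Omega({\bf x}_1)}(t)$, hence only $\zeta_{\Omega({\bf x}_2)}(s)\ge\zeta_{\Omega({\bf x}_1)}(s)$ and $\det(\Omega({\bf x}_2))\le\det(\Omega({\bf x}_1))$. To obtain strict inequalities you must rule out $Z_{\Omega({\bf x}_2)}\equiv Z_{\Omega({\bf x}_1)}$, or show that the $\varepsilon$-derivative of $Z$ is strictly positive on a set of positive $t$-measure. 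The paper does this by showing $\frac{\partial}{\partial\varepsilon}Z_{\Omega_\varepsilon}(t)\big|_{\varepsilon=0}$ is a nonnegative, real-analytic function of $t$ on each half-line $[\tau,\infty)$ (using analyticity of the eigenvalue branches in $\varepsilon$ and uniform summability) and that it is strictly positive for large $t$ because $\lambda_1$ is simple and $\frac{\partial\lambda_1}{\partial\varepsilon}\big|_{\varepsilon=0}<0$ by \cite{HKK}; hence it vanishes only at finitely many $t$. You could equivalently argue that $\lambda_1(\Omega({\bf x}_2))<\lambda_1(\Omega({\bf x}_1))$ by \cite{HKK} forces $Z_{\Omega({\bf x}_2)}(t)>Z_{\Omega({\bf x}_1)}(t)$ for large $t$, giving positivity on an interval; either way, this step needs to be made explicit.
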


Let us clarify what we mean by the ``limiting situation when the ball approaches the boundary''
in this and in the more general Theorem \ref{heart}.
Our conclusion in these cases is derived by contradiction.  By assuming that the obstacle is in the interior of the domain, extremality can be excluded.  If the function under consideration is continuous, then, since the set of configurations including the cases where the obstacle touches the boundary is compact, the extremum is attained there.  Since the heat trace is continuous with respect to translations of the obstacle, claim (i) is not problematic, and the same is true for claim (ii) when $s > \frac n 2$.  For
smaller real values of $s$ 
the spectral zeta function is defined by analytic continuation, and the determinant of the Laplacian 
is defined by \eqref{detdef}.  In the proof these quantities will be shown to be continuous as a function of the position of the obstacle as it moves to the boundary, and their limits are what we describe as the ``limiting situations'' of the theorems.  The subtlety here is that when the obstacle is in contact with the boundary, a cusp is formed, as a consequence of which the heat-trace 
asymptotics are not necessarily known well enough
to allow a direct definition of $\zeta(s)$, $s\le\frac n2$, by analytic continuation, cf. \cite{Vas}.

Since $e^{- \lambda_1(\Omega) t}$ is the leading term in $Z_{\Omega}(t)$ as $t$ goes to infinity, it is clear that Theorem \ref{ball} implies the optimization result 
 mentioned above for $\lambda_1$. 

In order to state the more general theorem of which Theorem \ref{ball} is a special case, we recall some definitions.

\begin{definition} Let $P$ be a hyperplane in $\R^n$ which intersects $D$ so that  $D\setminus P$ is the union of two open subsets located on either side of $P$. 
According to \cite{HKK}, the domain $D$ is said to have the \emph{interior reflection property} with respect to  $P$ if
the reflection through $P$ of one of  these subsets, denoted $D_s$, is contained in $D$. 
Any such P will be called a 
\emph{hyperplane of
interior reflection} for $D$. 
The subdomain $D_s$ will be called the 
{\em small side} of $D$ (with respect to $P$) and {\bf its complement  }  $D_b=D\setminus D_s$  will be called the {\em big side}.

When $D$ is convex,  the \emph{heart} of $D$ is defined as the intersection of all the big sides with respect to
the hyperplanes of interior reflection of $D$. We denote it $\heartsuit(D)$.
\end{definition}
This definition  of  $\heartsuit(D)$ is equivalent to that introduced in  \cite{BrMa, BrMaSa},
where several properties of the heart of a convex domain are investigated.
A point ${\bf x}$ belongs to $\heartsuit(D)$ if either 
 there is 
no hyperplane of interior reflection passing through ${\bf x}$ or if any hyperplane of interior reflection passing through ${\bf x}$ is such that the reflection of $\partial D_s\setminus P$ touches $\partial D_b$. The first situation occurs when ${\bf x}$ is an interior point of $\heartsuit(D)$ while the latter is characteristic of the boundary points of $\heartsuit(D)$.

\smallskip

By construction the heart of a bounded convex
domain $D$ is a nonempty relatively
closed subset of $D$. Moreover, if $D$ is strictly convex and bounded, then
${\rm dist} (\heartsuit(D), \partial D)>0$.   We observe that for the ball and for many other domains with sufficient symmetry to identify an unambiguous center
point,
the heart is simply 
the  center.
It is, however, shown in \cite{BrMa}
that without reflection symmetries 
the typical heart has non-empty interior, even for simple polygons.  
For instance, for
an asymmetric acute triangle, it is a quadrilateral bounded by two angle bisectors and two perpendicular axes, while for an asymmetric obtuse triangle, it can be either a quadrilateral or a pentagon.

\begin{theorem}\label{heart} Let $D$ be a bounded
 $C^2$ convex domain of $\R^n$ and let $r>0$ be such that $D_r=\{{\bf x}\in D : {\rm dist}({\bf x}, \partial D) > r\}\ne \emptyset$.
For every ${\bf x}\in \bar D_r$ we set $\Omega ( {\bf x} ) =D\setminus B({\bf x},r)$. 

\smallskip
\noindent(i) 
For each fixed
$t>0$, the function ${\bf x}\in \bar D_r\mapsto Z_{\Omega({\bf x})}(t)$ achieves its minimum at a point
${\bf x}_0(t) \in \heartsuit(D) $,
while the maximum is achieved either  at 
an interior point
${\bf x}_1(t)$ of $\heartsuit(D)$ or 
in the limiting situation when the  ball approaches
the boundary of $D$.
\smallskip

\noindent(ii) For each fixed
$s>0$ and $ {\bf x} \in D_r\setminus  \heartsuit(D)$, the zeta function satisfies
\begin{equation}\label{supzeta}
\zeta_{\Omega({\bf x})}(s)>\inf_{{\bf y}\in  \heartsuit(D)\cap D_r} \zeta_{\Omega({\bf y})}(s),
\end{equation}
and  $\zeta_{\Omega({\bf x})}(s)$ is less than the supremum of all the values attained by the zeta function in the limiting situations when the  ball approaches
the boundary of $D$.
Moreover, if $r< {\rm dist}(\heartsuit(D), \partial D)$, then  $ {\bf x} \in D_r\mapsto \zeta_{\Omega({\bf x})}(s)$ achieves its  infimum at 
a point ${\bf x}_0(s) \in \heartsuit(D)$,
while the supremum is reached either  at 
an interior point
${\bf x}_1(s)$ of $\heartsuit(D)$ or 
in a limiting situation when the  ball approaches
the boundary of $D$.

\smallskip

\noindent(iii)
The regularized  determinant of the Laplacian  satisfies 
\begin{equation}\label{supdet}
\det (\Omega (  {\bf x}  ))<\sup_{{\bf y}\in  \heartsuit(D)\cap D_r}\det (\Omega (  {\bf y}  ))
\end{equation}
for every $ {\bf x} \in D_r\setminus  \heartsuit(D)$, 
and $\det (\Omega (  {\bf x}  ))$ is greater than the infimum of all the values attained by the determinant in the limiting situations when the  ball approaches
the boundary of $D$.
Moreover, if $r< {\rm dist}(\heartsuit(D), \partial D)$, then  the function $ {\bf x} \in D_r\mapsto \det (\Omega (  {\bf x}  ))$ achieves its supremum at 
a point 
${\bf x'}_0 \in \heartsuit(D)$, while 
 the infimum is achieved either  at  an interior point ${\bf x'}_1$ of $\heartsuit(D)$ or in a limiting situation when the  ball approaches the boundary of $D$.

\end{theorem}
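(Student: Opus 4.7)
\emph{Approach.} My plan is to reduce the whole theorem to a reflection--monotonicity property of the Dirichlet heat trace, and then to transfer that property to the zeta function and to the determinant through the Mellin identity $\zeta_\Omega(s)=\Gamma(s)^{-1}\int_0^{\infty} t^{s-1}Z_\Omega(t)\,dt$ together with $\det(\Omega)=\exp(-\zeta'_\Omega(0))$. The central inequality to establish is the following. Let $P$ be a hyperplane of interior reflection for $D$ with small side $D_s$, big side $D_b$ and reflection $\sigma_P$. For every ${\bf x}\in D_s\cap D_r$ with reflected point ${\bf x}'=\sigma_P({\bf x})\in D_b$,
\[ Z_{\Omega({\bf x})}(t) > Z_{\Omega({\bf x}')}(t) \qquad \text{for all } t>0.\]
I will prove this via the Feynman--Kac/Brownian-bridge representation of the heat trace, pairing the bridges in $D\setminus B({\bf x},r)$ with those in $D\setminus B({\bf x}',r)$ through a measure-preserving reflection across $P$, in the spirit of Luttinger's argument and its obstacle version in \cite{HKK}. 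Crucially, because $D$ is convex, any hyperplane $P'$ parallel to $P$ that meets $D$ is again a hyperplane of interior reflection: given $(y,y_\perp)\in D$ with $y>a$, the interior reflection property of $P$ provides $(-y,y_\perp)\in D$, and by convexity every point on the segment between these two lies in $D$, in particular $(2a-y,y_\perp)$. Applying the endpoint inequality to this family of parallel hyperplanes upgrades the statement to a \emph{strict monotonicity}: if ${\bf x}$ moves perpendicularly to $P$ while remaining in $D_r$, then $Z_{\Omega({\bf x})}(t)$ is strictly increasing in the signed distance of ${\bf x}$ to $P$ measured towards $D_s$.

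\emph{Part (i).} By continuity of ${\bf x}\mapsto Z_{\Omega({\bf x})}(t)$ on the compact set $\overline{D_r}$, both extrema are attained. If a minimizer ${\bf x}_0$ lay outside $\heartsuit(D)$ it would lie strictly inside $D_s$ for some such $P$, and its reflection would strictly lower $Z$, contradiction; hence ${\bf x}_0\in\heartsuit(D)$. For a maximizer ${\bf x}_1$, either ${\bf x}_1\in\partial D_r$ (the boundary-contact case) or ${\bf x}_1\in D_r$; in the latter case, if ${\bf x}_1\in\overline{D_s}$ for some hyperplane $P$ of interior reflection, then a small admissible shift perpendicular to $P$ further into $D_s$ strictly increases $Z$ by the monotonicity just established, contradicting maximality. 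Thus ${\bf x}_1$ lies in the open big side of every hyperplane of interior reflection, i.e., in the interior of $\heartsuit(D)$.

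\emph{Parts (ii) and (iii).} For $\mathrm{Re}\,s>n/2$ the Mellin identity directly propagates the pointwise strict inequality on $Z$ to one on $\zeta$. For $0<s\le n/2$, observe that $\Omega({\bf x})$ and $\Omega({\bf x}')$ have isometric boundary components and identical volume, so all Seeley--DeWitt heat invariants coincide; consequently $Z_{\Omega({\bf x})}(t)-Z_{\Omega({\bf x}')}(t)$ vanishes to infinite order as $t\to 0^+$, the Mellin integral of the difference converges absolutely for every $s>0$, and the strict inequality extends. Re-running the min/max argument of (i) then yields (ii). Differentiating the Mellin identity at $s=0$, with $1/\Gamma(0)=0$ and $(1/\Gamma)'(0)=1$,
\[ \zeta'_{\Omega({\bf x})}(0)-\zeta'_{\Omega({\bf x}')}(0) = \int_0^{\infty}\frac{Z_{\Omega({\bf x})}(t)-Z_{\Omega({\bf x}')}(t)}{t}\,dt \; > \; 0,\]
which through $\det(\Omega)=\exp(-\zeta'_\Omega(0))$ reverses sign: $\det(\Omega({\bf x}))<\det(\Omega({\bf x}'))$. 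Re-running the min/max argument with this reversed monotonicity yields (iii).

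\emph{Main obstacle.} The genuinely hard step is the pathwise reflection/pairing argument behind the central inequality, and especially its upgrade from endpoint comparison to strict monotonicity via the parallel-hyperplane family; the rest is essentially transport through standard identities. A secondary technical subtlety, already flagged in the introduction, is the continuity of $\zeta_\Omega(s)$ and $\det(\Omega)$ in the contact limit ${\bf x}\in\partial D_r$: a cusp forms at contact and the standard short-time heat expansion no longer applies, which is why the theorem phrases the contact case as a ``limiting situation''.
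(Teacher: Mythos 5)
Your overall architecture matches the paper's --- establish a monotonicity of $Z_{\Omega({\bf x})}(t)$ as the obstacle moves perpendicular to a hyperplane of interior reflection, transfer to $\zeta$ and $\det$ by Mellin transform after subtracting the $\bf x$-independent short-time asymptotics, and then run continuity/compactness to locate the extrema --- and your parallel-hyperplane observation (that convexity forces every hyperplane parallel to $P$ and shifted into the small side to again be a hyperplane of interior reflection) is essentially the same device the paper uses. But the entire mathematical content is in the step you flag and do not supply: the ``central inequality'' $Z_{\Omega({\bf x})}(t)>Z_{\Omega({\bf x}')}(t)$. The paper proves the corresponding statement (in infinitesimal form, Proposition~2.1) via a Hadamard variation formula for the heat trace combined with a two-pass parabolic maximum principle applied to $\phi(t,{\bf x},{\bf y})=K(t,{\bf x},{\bf y})-K(t,{\bf x}^*,{\bf y}^*)$ on the small-side domain $\Omega_s$, followed by a Fermi-coordinate computation to show $\Delta\phi(t,{\bf x},{\bf x})\le 0$ on $(\partial B)_s$, and finally a separate analyticity argument to upgrade $\ge 0$ to $>0$ off a finite exceptional set of $t$. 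Your proposed Feynman--Kac/Brownian-bridge pairing is not obviously equivalent: the two domains $D\setminus B({\bf x},r)$ and $D\setminus B({\bf x}',r)$ are \emph{not} reflections of each other (only the obstacle is reflected; $D$ need not be $P$-symmetric), so there is no measure-preserving reflection pairing their loop spaces, and a naive Luttinger-type coupling does not go through. One would have to set up a coupling only on $\Omega_s$ and use the interior reflection property as a one-sided domain inclusion --- which amounts to a probabilistic shadow of the paper's maximum-principle argument. Since you leave this as a black box, the proposal as written has a real gap precisely at its load-bearing lemma.

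Two secondary points. First, your reflected configuration needs to be checked admissible: one must verify $\sigma_P\bigl(B({\bf x},r)\bigr)\subset D$. This holds (the piece in $D_s$ reflects into $D$ by the interior reflection property, while the piece in $D_b$ reflects into $B({\bf x},r)$ itself since the center ${\bf x}$ is on the small side), but it deserves a line. Second, for the Mellin step at $0<s\le n/2$ you appeal to the coincidence of all Seeley--DeWitt invariants to get infinite-order vanishing of $Z_{\Omega({\bf x})}-Z_{\Omega({\bf x}')}$ as $t\to 0^+$; the paper only needs (and only uses) agreement of the first $n+1$ coefficients $a_0,\dots,a_n$, after which $\tilde Z/\sqrt t$ is bounded near $0$ and the split Mellin integral \eqref{zeta} converges for all $s>0$. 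Your stronger claim is true in the asymptotic-expansion sense but is unnecessary, and stating it as pointwise infinite-order vanishing would require further justification. The resulting difference formula for $\zeta'(0)$ you write is correct; the paper reaches it via the explicit decomposition \eqref{zetaprime}, which also makes transparent that the $\bf x$-independent $R'(0)$ drops out.
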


We remark that a straightforward consideration of the limit $t \to \infty$
leads back to related results of
 \cite{HKK}.  As in that article, it is not difficult to extend Theorem \ref{heart} to many nonconvex
 domains, at the price of entering into the sometimes complex nature of 
$\heartsuit(D)$.  For simplicity we limit the present article to the case of convex $D$.

\smallskip

We conjecture that
 the minimum of $Z_{\Omega({\bf x})}(t)$
is never achieved outside $\heartsuit(D)$, and that
 the maximum of $Z_{\Omega({\bf x})}(t)$ and
resp. the maximum of $\zeta_{\Omega({\bf x})}(s)$ and  the minimum of $\det (\Omega (  {\bf x}  ))$), are
achieved only 
in the limiting situation where  $B({\bf x},r)$ touches the boundary of  $D$. This is certainly
the case for example when a convex domain  $D$ admits a hyperplane of symmetry since then, ${\rm int }(\heartsuit(D))=\emptyset$. 

\begin{remark} 
We shall approach the analysis of the spectral zeta function and the regularized determinant  through  order-preserving integral transforms relating them to the heat trace.  As in \cite{HaHe}, transform theory can 
similarly be used to obtain
corollaries for many 
further functions, e.g., Riesz means, with respect to the optimal position of an obstacle.


 \end{remark} 
A main ingredient of the proof is the following Hadamard-type formula for the first variation of $Z_\Omega(t)$ with respect to a deformation $\Omega_\varepsilon=f_\varepsilon(\Omega)$ of the domain : 
\begin{equation}\label{hadamard-heat}
\frac{\partial}{\partial \varepsilon} Z_{\Omega_{\varepsilon}}(t)\big|_ {\varepsilon=0} = -{t\over
2} \int_{\partial\Omega}  \Delta K(t,{\bf x},{\bf x})  v({\bf x}) dx,
\end{equation}
where $v= X\cdot \nu$ is the   component of the deformation vectorfield $X=\frac{df_\varepsilon}{d\varepsilon}\vert_{\varepsilon=0}$ in the direction of the inward unit normal $\nu$, and  $K$ is the heat kernel
(cf. \cite[Theorem 4.1]{EI1}).   Notice that this formula coincides with that given by Ozawa in \cite[Theorem 4]{oza} for deformations of the form $f_\varepsilon({\bf x}) ={\bf x} +\varepsilon \rho ({\bf x})\nu({\bf x})$ along the boundary, where $\rho$ is a smooth function on $\partial\Omega$. Indeed, it is easy to check that for all $ {\bf x}\in\partial\Omega$, $v({\bf x})=\rho ({\bf x}) $ and (using \eqref{heatkerseries} below)
$ \Delta K(t,{\bf x},{\bf x}) = {2} \sum_{k=0}^\infty e^{-\lambda_k t} \vert\nabla u_k({\bf x})\vert^2= {2} \sum_{k=0}^\infty e^{-\lambda_k t} \vert\frac{\partial u_k}{\partial\nu}({\bf x})\vert^2$. For more information about Hadamard deformations we refer to 
\cite{EI2, EI1, Gar, GaSc, Henry, oza, RS}.)
 \section {Proof of results}\label{2}

 Let $\Omega\subset \R^n$ be a   domain  of the form $\Omega=D\setminus B$, where $D$ is a bounded domain and $B$ is a convex domain  such that the closure of $B$ is contained in $D$. 
{(For simplicity our theorems have been stated for the case of a spherical obstacle $B$, but the essential argument requires only a lower degree of symmetry.)}

\smallskip
Let us start by establishing how the zeta function and the Laplacian determinant  are related to the heat trace in our situation. Indeed, the following formula  is valid for every complex number $s$ with $\mathrm{Re} \ s>\frac n2$:
$$ \zeta_{\Omega} (s):=\sum_{k \geq 1}{}\ {1\over{\lambda_k^s(\Omega})}=
{1\over \Gamma(s)}
\int_{0}^{\infty} Z_{\Omega}(t) t^{s-1}dt.$$
It is well known that the function $Z_{\Omega}$ satisfies
\begin{equation}\label{ZetaAsym}
Z_{\Omega}(t)\sim \sum_{k \geq 0}
a_{k}\ t^{(k-n) \over 2}\qquad {\rm as }\  t\to 0,
\end{equation}
where $a_k$ is a sequence of real numbers that depend only
on the geometry of the boundary of $\Omega$ (see e.g. \cite{BrGi}). In particular, {\bf the 
coefficients $a_k$ 
are independent of the position of $B$ within $D$}.
We set
$$ \tilde Z_{\Omega}(t) =  Z_{\Omega}(t) - \sum_{k =0 }^{n}
a_{k}\ t^{(k-n) \over 2},$$ 
so that $\tilde Z_{\Omega}(t)/\sqrt  t $ is a bounded function in 
a neighborhood of $t=0$. We also introduce the meromorphic  function 
$$
R(s)= {1\over \Gamma(s)}\sum_{k =0 }^{n}a_{k}\int_{0}^{1} t^{s-1+(k-n)/2}dt = {1\over \Gamma(s)}\sum_{k =0 }^{n}
\frac{a_{k}}{s-(n-k)/ 2},
$$
which has poles at $1/2, 1, 3/ 2, 2, \cdots,  n/2$.  (Note that $s=0$ is not a pole since $1\over s\Gamma(s)$ is a holomorphic function on $\C$.)  
Consequently,
for every 
$s\in \R^+$,
\begin{equation}\label{zeta}
  \zeta_{\Omega} (s)= R(s)+
{1\over \Gamma(s)} 
\int_{0}^{1} \tilde Z_{\Omega}(t) t^{s-1}dt + {1\over \Gamma(s)} 
\int_{1}^{\infty} Z_{\Omega}(t) t^{s-1}dt.
\end{equation}
where the last term is an entire function of $s$, since $ Z_{\Omega}(t) $ behaves as $ e^{-\lambda_1(\Omega) t}$ when $t\to +\infty$.  
On the other hand, the reciprocal gamma function $f(s):={1\over \Gamma(s)}$ vanishes at $s=0$ and satisfies $f'(0)=1$. Therefore,
\begin{equation}\label{zetaprime}
\zeta'_{\Omega} (0)= R'(0) + 
\int_{0}^{1} \tilde Z_{\Omega}(t) t^{-1}dt + 
\int_{1}^{\infty} Z_{\Omega}(t) t^{-1}dt .
\end{equation}

\smallskip

Assume that the domain $D$  has the interior reflection property with respect
to a hyperplane $P$ about which the set $B$ is reflection-symmetric. 
(Here we do not need to restrict to convex $D$.)
Our strategy is to consider a
displacement of the obstacle by $\varepsilon$ in a certain direction and to
show that $Z_{\Omega_{\varepsilon}}(t)$ is monotonically increasing in that direction.

\smallskip
Thus let $V$ be the unit vector perpendicular to $P$ and pointing in the direction of the small side  $D_s$. For small $\varepsilon >0$, we translate $B$ 
by a distance $\varepsilon$ in the direction of $V$ and set $B_{\varepsilon}:=B+\varepsilon V$ 
and $\Omega_{\varepsilon}:=D\setminus B_{\varepsilon}$.   The results of this paper rely on the following proposition.

 \begin{proposition}\label{det} 
Assume that the domain $D$  has the interior reflection property with respect
to a hyperplane $P$ about which the set $B$ is reflection-symmetric. Consider displacements 
as described above.
Then,
\begin{equation}\label{derivZ}
\frac{\partial}{\partial \varepsilon} Z_{\Omega_{\varepsilon}}(t)\big|_ {\varepsilon=0} >0,
\end{equation}
{except possibly for a finite set of values $t$
in any interval $[\tau,\infty)$ with $\tau > 0$.  Moreover, for each $s > 0$,}

\begin{equation}\label{derivzeta}
\frac{d}{d \varepsilon} \zeta_{\Omega_{\varepsilon}}(s)\big|_ {\varepsilon=0} >0,
\end{equation}
and
\begin{equation}\label{derivdet}
\frac{d}{d \varepsilon} \det({\Omega_{\varepsilon}})\big|_ {\varepsilon=0} <0.
\end{equation}
\end{proposition}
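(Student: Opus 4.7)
The plan is to prove \eqref{derivZ} first, using the Hadamard formula \eqref{hadamard-heat} and a reflection-maximum principle argument, and to obtain \eqref{derivzeta} and \eqref{derivdet} as positivity-preserving (resp. positivity-reversing) integral transforms in $t$. Applying \eqref{hadamard-heat} to the deformation $f_\varepsilon(\mathbf{x})=\mathbf{x}+\varepsilon V$ (identity on $\partial D$, translation by $\varepsilon V$ on $\partial B$), the normal component is $v=V\cdot\nu_B$ on $\partial B$ and $v=0$ on $\partial D$. Using $R_P(B)=B$ together with $R_PV=-V$, which implies $(V\cdot\nu_B)(R_P\mathbf{x})=-(V\cdot\nu_B)(\mathbf{x})$ on $\partial B$, I fold the integral onto $(\partial B)_+:=\partial B\cap\overline{D_s}$:
\[
\frac{\partial}{\partial\varepsilon}Z_{\Omega_\varepsilon}(t)\bigg|_{\varepsilon=0}=-\frac{t}{2}\int_{(\partial B)_+}\bigl[\Delta K(t,\mathbf{x},\mathbf{x})-\Delta K(t,R_P\mathbf{x},R_P\mathbf{x})\bigr]\,(V\cdot\nu_B)(\mathbf{x})\,d\sigma.
\]
Since $V\cdot\nu_B>0$ on $(\partial B)_+$, \eqref{derivZ} reduces to the strict pointwise inequality $\Delta K(t,R_P\mathbf{x},R_P\mathbf{x})>\Delta K(t,\mathbf{x},\mathbf{x})$ for $\mathbf{x}\in(\partial B)_+$, valid for every $t>0$ outside an at most discrete (hence finite in $[\tau,\infty)$) set, by real-analyticity of $\partial_\varepsilon Z$ in $t$.

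The key step is this pointwise inequality. The semigroup identity combined with the vanishing of $K_\Omega$ on $\partial\Omega$ gives $\tfrac{1}{2}\Delta K(t,\mathbf{x},\mathbf{x})=\int_\Omega|\partial_{\nu,\mathbf{x}}K_\Omega(t/2,\mathbf{x},\mathbf{y})|^2\,d\mathbf{y}$ at each boundary point, so the claim becomes an $L^2(\Omega)$-norm comparison in $\mathbf{y}$. For fixed $\mathbf{y}\in\Omega$ set
\[
w_\mathbf{y}(t,\mathbf{x}):=K_\Omega(t,R_P\mathbf{x},\mathbf{y})-K_\Omega(t,\mathbf{x},\mathbf{y}),\qquad \mathbf{x}\in\Omega_s:=\Omega\cap D_s,
\]
which is well-defined since $R_P(\Omega_s)\subset\Omega$ by the interior reflection property. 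This $w_\mathbf{y}$ solves the heat equation in $\Omega_s$, vanishes on $(\partial B)_+$ and on $P\cap\Omega$, and is non-negative on $\partial D\cap\overline{D_s}$ (reducing there to $K_\Omega(t,R_P\mathbf{x},\mathbf{y})\ge 0$, because $R_P\mathbf{x}$ is then an interior point of $\Omega$). For $\mathbf{y}\in\Omega_b:=\Omega\cap D_b$, the initial data of $w_\mathbf{y}$ in $\Omega_s$ is non-negative, so the parabolic maximum principle gives $w_\mathbf{y}\ge 0$ in $\Omega_s$. Since the boundary data on $\partial D\cap\overline{D_s}$ is strictly positive on a nonempty set, the strong maximum principle makes the inequality strict in the interior, and Hopf's lemma at $\mathbf{x}_+\in(\partial B)_+$ gives
\[
\partial_{\nu,\mathbf{x}_-}K_\Omega(t,\mathbf{x}_-,\mathbf{y})>\partial_{\nu,\mathbf{x}_+}K_\Omega(t,\mathbf{x}_+,\mathbf{y})\ge 0,\qquad\mathbf{y}\in\Omega_b,
\]
with $\mathbf{x}_-:=R_P\mathbf{x}_+\in(\partial B)_-$. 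Extension to $\mathbf{y}\in\Omega_s$ uses the symmetry $K_\Omega(t,\mathbf{x},\mathbf{y})=K_\Omega(t,\mathbf{y},\mathbf{x})$ to rerun the maximum-principle argument with the roles of the two arguments of $K_\Omega$ swapped. Squaring (permitted as both sides are non-negative) and integrating over $\mathbf{y}\in\Omega$ yields $\Delta K(t,\mathbf{x}_-,\mathbf{x}_-)>\Delta K(t,\mathbf{x}_+,\mathbf{x}_+)$, completing the proof of \eqref{derivZ}.

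Conclusions \eqref{derivzeta} and \eqref{derivdet} follow quickly. Since the coefficients $a_k$, hence $R(s)$, are independent of the obstacle's position (as noted in the excerpt), $\partial_\varepsilon\tilde Z_{\Omega_\varepsilon}=\partial_\varepsilon Z_{\Omega_\varepsilon}$, and differentiating \eqref{zeta} in $\varepsilon$ under the integral sign gives, for $s>0$,
\[
\frac{d}{d\varepsilon}\zeta_{\Omega_\varepsilon}(s)\bigg|_{\varepsilon=0}=\frac{1}{\Gamma(s)}\int_0^\infty\frac{\partial Z_{\Omega_\varepsilon}(t)}{\partial\varepsilon}\bigg|_{\varepsilon=0}\,t^{s-1}\,dt>0,
\]
as the integrand is a.e.\ positive and the kernel $t^{s-1}/\Gamma(s)$ is positive (convergence at $t=0$ follows from $\tilde Z_\Omega=O(\sqrt t)$ uniformly in $\varepsilon$, and at $\infty$ from the exponential decay of $Z_\Omega$). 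Applying the same computation to \eqref{zetaprime} shows $\partial_\varepsilon\zeta'_{\Omega_\varepsilon}(0)|_{\varepsilon=0}>0$, and consequently $\partial_\varepsilon\det(\Omega_\varepsilon)|_{\varepsilon=0}=-\det(\Omega)\,\partial_\varepsilon\zeta'_{\Omega_\varepsilon}(0)|_{\varepsilon=0}<0$, which is \eqref{derivdet}.

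The main obstacle is the reflection/maximum-principle step, specifically extending the strict boundary-derivative inequality from $\mathbf{y}\in\Omega_b$ to $\mathbf{y}\in\Omega_s$: the setup for $w_\mathbf{y}$ gives non-negative initial data in $\Omega_s$ only when $\mathbf{y}\in\Omega_b$, so for $\mathbf{y}\in\Omega_s$ one must work with a dual version using the symmetry of $K_\Omega$ and check that the combined pointwise comparison survives squaring and integration. The interior reflection property is used precisely to ensure that $w_\mathbf{y}$ has a strictly positive contribution on the asymmetric boundary piece $\partial D\cap\overline{D_s}$, which breaks the reflection symmetry and drives the strict Hopf inequality.
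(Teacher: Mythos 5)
Your strategy for the key inequality is genuinely different from the paper's, and the step you yourself flag as the ``main obstacle'' is a real gap. The paper compares $K$ against its \emph{double} reflection: it sets $\phi(t,{\bf x},{\bf y}) = K(t,{\bf x},{\bf y}) - K(t,{\bf x}^*,{\bf y}^*)$ on $(0,\infty)\times\Omega_s\times\Omega_s$, where ${\bf x}^*$ denotes the reflection through $P$. Because both arguments are reflected, the initial datum $\delta_{\bf x}({\bf y}) - \delta_{{\bf x}^*}({\bf y}^*)$ cancels identically, and a two-stage parabolic maximum principle (first in ${\bf y}$ for each boundary ${\bf x}\in\partial\Omega_s$, then in ${\bf x}$, the first stage supplying boundary data for the second) gives $\phi\le 0$ on all of $\Omega_s\times\Omega_s$ with no case split on the location of ${\bf y}$. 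The needed sign of $\Delta K(t,{\bf x},{\bf x}) - \Delta K(t,{\bf x}^*,{\bf x}^*)$ then follows by restricting $\phi$ to the diagonal, noting it vanishes to second order on $(\partial B)_s$ and is $\le 0$ nearby, and reading off the second normal derivative in Fermi coordinates.

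Your single-reflection function $w_{\bf y}(t,{\bf x}) = K(t,R_P{\bf x},{\bf y}) - K(t,{\bf x},{\bf y})$ has nonnegative initial data in $\Omega_s$ only when ${\bf y}\in\Omega_b$; for ${\bf y}\in\Omega_s$ the datum contains $-\delta_{\bf y}$ and the maximum principle does not apply. Indeed the pointwise inequality $\partial_\nu K(t,{\bf x}_-,{\bf y})\ge\partial_\nu K(t,{\bf x}_+,{\bf y})$ must actually fail for ${\bf y}\in\Omega_s$ close to ${\bf x}_+$, by the short-distance behavior of the heat kernel near the diagonal. Your proposed ``rerun with the arguments swapped'' is not spelled out, and the two comparisons one can legitimately extract (in your notation $F\le G$ on $\Omega_b$ from your $w_{\bf y}$, and $F({\bf y})\le G({\bf y}^*)$ on $\Omega_s$ from the double reflection) do not assemble into $\|F\|^2_{L^2(\Omega)}\le\|G\|^2_{L^2(\Omega)}$: squaring and integrating yields only $\|F\|^2\le\int_{R_P(\Omega_s)}G^2 + \int_{\Omega_b}G^2\le 2\int_{\Omega_b}G^2$, which is not bounded by $\|G\|^2$ without an extra, unsupported, comparison between $\int_{\Omega_s}G^2$ and $\int_{\Omega_b}G^2$. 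Closing this gap effectively requires the double-reflection inequality, which is precisely what the paper's $\phi\le 0$ delivers in one stroke via the diagonal. A smaller omission: you assert, but do not establish, analyticity of $t\mapsto\partial_\varepsilon Z_{\Omega_\varepsilon}(t)\big|_{\varepsilon=0}$ on the right half-plane; the paper justifies this by showing the family of Dirichlet Laplacians is Kato-analytic in $\varepsilon$ and by invoking polynomial bounds on $\|u_k\|_\infty$ and $\|\nabla u_k\|_\infty$. Your passage from \eqref{derivZ} to \eqref{derivzeta} and \eqref{derivdet} via \eqref{zeta} and \eqref{zetaprime} matches the paper and is fine.
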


 \begin{proof} The heat kernel $K$ on $\Omega$ under the
Dirichlet boundary condition is defined as the fundamental solution of the heat equation, that is

$$\left\{
\begin{array}{l}
({\partial\over \partial t } - \Delta_y) K(t,{\bf x},{\bf y}) =0 \;\; \hbox{in}\; \Omega\\
\\
K(0^+,{\bf x},{\bf y})=\delta_{\bf x}({\bf y})\\
\\
K(t,{\bf x},{\bf y}) =0 \;\; \forall {\bf y}\in \partial \Omega. \;
\end{array}
\right.$$
The relationship between the heat kernel and the spectral decomposition of the Dirichlet
Laplacian in $\Omega$ is given by
\begin{equation}\label{heatkerseries}
K(t,{\bf x},{\bf y})=\sum_{k\ge 1} e^{- \lambda_k(\Omega) t} u_k ({\bf x}) u_k ({\bf y}),
\end{equation}
where $(u_k)_{k\ge1}$ is an $L^2(\Omega)$-orthonormal family of eigenfunctions satisfying
$$\left\{
\begin{array}{l}
-\Delta u_k=\lambda_k (\Omega) u_k\;\; \hbox{in}\; \Omega\\
\\
u_k =0 \;\; \hbox{on}\; \partial \Omega.\\
\end{array}
\right.$$\\
The heat trace is then given by
$$Z_{\Omega}(t) = \int_\Omega K(t,{\bf x},{\bf x}) dx= \sum_{k\ge 1} e^{- \lambda_k (\Omega) t}.$$

Let $X$ be a smooth vectorfield such that $X$ vanishes on $\partial D$ and coincides with the vector $V$ on $\partial B$. For sufficiently small $\varepsilon$, one has
$\Omega_{\varepsilon} = f_\varepsilon (\Omega)$ where $f_\varepsilon ({\bf x})={\bf x}+\varepsilon X({\bf x})$.
The Hadamard-type formula \eqref{hadamard-heat} gives :
\begin{eqnarray*}
\frac{\partial}{\partial \varepsilon} Z_{\Omega_{\varepsilon}}(t)\big|_ {\varepsilon=0} &=& -{t\over
2} \int_{\partial\Omega}   \Delta K(t,{\bf x},{\bf x}) \left(X\cdot \nu\right)({\bf x}) dx\\
&=& -{t\over
2} \int_{\partial B}  \Delta K(t,{\bf x},{\bf x}) \, V\cdot\nu ({\bf x}) dx.
\end{eqnarray*}
Let $B_s$ be the half of $B$ contained in the small side $D_s$ of $D$ and $(\partial B)_s=\partial B\cap D_s$.  
Here, we assume without loss of generality  that $D_s$ is connected  (otherwise,  $B_s$ is contained in one connected component of $D_s$ and we concentrate our analysis on this single component).
Using the symmetry assumption on $B$ with respect to $P$ we obtain
\begin{equation}\label{hadamard}
\frac{\partial}{\partial \varepsilon} Z_{\Omega_{\varepsilon}}(t)\big|_ {\varepsilon=0} = -{t\over
2} \int_{(\partial B)_s}   \left(\Delta K(t,{\bf x},{\bf x})-\Delta K(t,{\bf x}^*,{\bf x}^*)\right) \, V\cdot\nu ({\bf x}) dx
\end{equation}
where ${\bf x}^*$ stands for the reflection of ${\bf x}$ through $P$. 

\medskip
Define the function $\phi(t,{\bf x},{\bf y})=K(t,{\bf x},{\bf y})-K(t,{\bf x}^*,{\bf y}^*)$ on $(0,\infty)\times \Omega_s\times\Omega_s$ with $\Omega_s=D_s\setminus B_s$. 

\medskip

\textit{Claim }: For all $(t,{\bf x},{\bf y})\in (0,\infty)\times \Omega_s\times\Omega_s$, $\phi(t,{\bf x},{\bf y}) \le 0$.

\noindent
Let us check the sign of $\phi(t,{\bf x},{\bf y})$ on  $(0,\infty)\times \partial\Omega_s\times\partial \Omega_s$. Notice that $\partial\Omega_s$ is the union of three components : $(\partial D)_s$, $(\partial B)_s$ and $\Omega \cap P$.  First, 
from the boundary conditions, if ${\bf x}\in (\partial B)_s$ or ${\bf y}\in (\partial B)_s$, then $K(t,{\bf x},{\bf y})=K(t,{\bf x}^*,{\bf y}^*)=0$ and, hence, $\phi(t,{\bf x},{\bf y}) =0$. On the other hand, $K(t,{\bf x},{\bf y})$ vanishes as soon as ${\bf x}\in (\partial D)_s$ or ${\bf y}\in (\partial D)_s$, which implies $ \phi(t,{\bf x},{\bf y})=-K(t,{\bf x}^*,{\bf y}^*)\le 0$.  It remains to consider the case where both ${\bf x}$ and ${\bf y}$ belong to  $\Omega \cap P$. In this case we have ${\bf x}^*={\bf x}$, ${\bf y}^*={\bf y}$ and $\phi(t,{\bf x},{\bf y}) =0$.

Observe next that for all ${\bf x}\in \bar\Omega_s$, the function $(t,{\bf y})\mapsto \phi(t,{\bf x},{\bf y})$ is a solution of the following parabolic problem :
$$(*)\left\{
\begin{array}{l}
({\partial\over \partial t } - \Delta_y) \phi(t,{\bf x},{\bf y}) =0 \;\; \hbox{in}\; \Omega_s\\
\\
\phi(0^+,{\bf x},{\bf y})=0.
\end{array}
\right.$$
Given any ${\bf x}\in\partial\Omega_s$, the parabolic maximum principle (see e.g., \cite{Eva}, 
{\S 7.1})
tells us that, since $(t,{\bf y})\mapsto \phi(t,{\bf x},{\bf y})$  is nonpositive on the boundary of the cylinder $(0,\infty)\times \Omega_s$, 
it follows that
$ \phi(t,{\bf x},{\bf y})\le 0$ for all $t>0$ and all ${\bf y}\in \bar\Omega_s$.

Now,  from the symmetry of $\phi$ with respect to ${\bf x}$ and ${\bf y}$, the function $(t,{\bf x})\mapsto \phi(t,{\bf x},{\bf y})$ satisfies the same parabolic system as $(*)$. Since we 
have established
that $\forall {\bf y}\in \bar\Omega_s$, the function $(t,{\bf x})\mapsto \phi(t,{\bf x},{\bf y})$ is 
everywhere nonpositive on the boundary of the cylinder $(0,\infty)\times \Omega_s$, the parabolic maximum principle then implies 
that   
$\phi(t,{\bf x},{\bf y}) $ is nonpositive in the whole cylinder $(0,\infty)\times \Omega_s\times\Omega_s$.

\medskip

\textit{Claim }:  $\Delta \phi (t,{\bf x},{\bf x})\le0$ for all $(t,{\bf x})\in (0,\infty)\times (\partial B)_s$.

For a sufficiently small $\delta >0$, let $V=\{\psi({\bf z},\rho):={\bf z}+\rho\ \nu({\bf z}) \; ; \; {\bf z}\in\partial B\mbox{ and }  0\le\rho < \delta\}$ be the 1-sided $\delta$-tubular neighborhood of $\partial B$.  The Euclidean metric $g_{E}$ can be expressed in $V$ with respect to so-called Fermi coordinates $({\bf z},\rho)\in\partial B \times (0,\delta)$ as follows (see for instance \cite[Lemma 3.1]{pac}) :
$$g_E= d\rho^2+g_\rho,$$
where $g_\rho $ is a Riemannian metric on the hypersurface $ \Gamma_\rho= \{{\bf z}+\rho\ \nu({\bf z}) \; ; \; {\bf z}\in\partial B\}$. Consequently, the Euclidean 
 Laplacian in $V$ takes on the following form  with respect to Fermi coordinates :
 $$\Delta=  \frac{\partial^2}{\partial \rho^2} - H_\rho   \frac{\partial}{\partial \rho} + \Delta_{g_\rho},$$
where $H_\rho$ is the mean curvature of $ \Gamma_\rho$ and $\Delta_{g_\rho}$ is the Laplace-Beltrami operator of $(\Gamma_\rho, g_\rho)$.

Now, $ K(t,{\bf x},{\bf x})=\sum_{k\ge 1} e^{- \lambda_k(\Omega) t} u_k ({\bf x})^2$, 
and it is known that for $C^2$ domains, $\|\nabla u_k\|_\infty$ 
is bounded by a constant times a finite power of 
$\lambda_k$  (see \cite{Gri,HaTa}).  
{Hence, the functions $ K(t,{\bf x},{\bf x})$ and 
consequently $\phi (t,{\bf x},{\bf x})$ vanish} quadratically  on $(\partial B)_s$. Thus,  for any point $ {\bf z}=\psi({\bf z}, 0)   \in (\partial B)_s $,
$$ \frac{\partial }{\partial \rho}\phi (t,\psi({\bf z}, \rho),\psi({\bf z}, \rho))  \big\vert_{\rho=0}=0 \quad \mbox{and}\quad \Delta_{g_\rho}\phi (t,\psi({\bf z}, \rho),\psi({\bf z}, \rho)) \big\vert_{\rho=0} =0.$$
Therefore,
$$
\Delta \phi (t,{\bf z},{\bf z})= \frac{\partial^2}{\partial \rho^2}\phi (t,\psi({\bf z}, \rho),\psi({\bf z}, \rho))\big\vert_{\rho=0},
$$
which is nonpositive since $ \frac{\partial }{\partial \rho}\phi (t,\psi({\bf z}, \rho),\psi({\bf z}, \rho))  \big\vert_{\rho=0}=0 $ and, according to what we proved in  the previous claim, the function  $\rho\in[0,\delta)\mapsto  \phi (t,\psi({\bf z}, \rho),\psi({\bf z}, \rho))$ achieves its maximum at $\rho=0$.

\medskip

\textit{Claim }: Let $\tau $ be any positive  real number. Except possibly for a finite set of values of $t$ 
in $[\tau,\infty)$,
$$
\frac{\partial}{\partial \varepsilon} Z_{\Omega_{\varepsilon}}(t)\big|_ {\varepsilon=0} >0.$$
From the assumptions that $B$ is convex and symmetric with respect to the hyperplane $P$, it follows that the product $V\cdot\nu({\bf x})$ is positive at
{almost}
every point ${\bf x}$ of  $(\partial B)_s$. From equation \eqref{hadamard} and the previous claim we then deduce that $\forall t>0$, 
$$
\frac{\partial}{\partial \varepsilon} Z_{\Omega_{\varepsilon}}(t)\big|_ {\varepsilon=0} \ge 0.
$$
 
To show that this quantity cannot vanish at more than a finite set of values of $t\in [\tau,\infty)$, we shall show
that it is analytic as a function of $t$ in the open right half plane, and
positive for real values of $t$ sufficiently large.  By the unique continuation theorem an 
analytic function that vanishes on a set with a point of accumulation is identically zero,
which would pose a contradiction.

To establish
{the analytic properties}
of $Z_{\Omega_{\varepsilon}}(t)$, we
argue as follows.
Observe first that the deformation $\Omega_\varepsilon$ depends analytically on $\varepsilon$
{in a neighborhood of $0$},
since $\Omega_\varepsilon=f_\varepsilon (\Omega)$ with $f_\varepsilon ({\bf x})={\bf x}+\varepsilon X({\bf x})$. As in the proof of Lemma 3.1 in \cite{EI1}, the
{Dirichlet Laplacian in $\Omega_\varepsilon$  is an  analytic family of operators in the sense of 
Kato
\cite{Kat} with 
respect to the parameter $\varepsilon$.}
Because each eigenvalue of the Laplacian is at most finitely degenerate, 
according to
{\cite[p. 425]{Kat},}
there is a numbering of the eigenvalues 
$\{\lambda_k(\Omega_\varepsilon)\} \to \{\Lambda_k(\varepsilon)\}$ for which each
$ \Lambda_k(\varepsilon)$ is analytic in $\varepsilon$ in a neighborhood of 
$\varepsilon=0$. 
(Using this numbering, which is important only in a neighborhood
of a degenerate eigenvalue,
does not alter $Z(t)$ as defined in
\eqref{Zdef}.)
In consequence of the Hadamard formula for the derivative of an eigenvalue,
$\frac{\partial \Lambda_k}{\partial \varepsilon} |_{\varepsilon=0}$
is dominated 
in norm by the integral of the square of the normal derivative of an
associated $L^2$ normalized eigenfunction
$u_k$ over the boundary of the obstacle.
{We again call upon estimates for $C^2$ domains, by which both}
$\|u_k\|_\infty$ and $\|\nabla u_k\|_\infty$ are bounded by constants times finite powers of 
$\lambda_k$ 
\cite{Gri,HaTa}, which in turn $\sim k^{\frac{2}{n}}$ by the Weyl law.  It follows that 
both the series $\sum_{k\ge 1} e^{- \lambda_k(\Omega_{\varepsilon}) t}$ and its term-by-term derivative 
with respect to $\varepsilon$ converge uniformly on each set of the 
form $\{{\rm Re  }\  t \ge \tau > 0\}$, 
and are therefore analytic on such sets.

To finish the argument, we observe that by differentiating $ Z_{\Omega_{\varepsilon}}(t) = \sum_{k\ge 1} e^{- \lambda_k(\Omega_{\varepsilon}) t}$,
$$
\left.\frac{\partial}{\partial \varepsilon} Z_{\Omega_{\varepsilon}}(t)\right|_ {\varepsilon=0}
= e^{- \lambda_1(\Omega) t}\left( - \left.\frac{\partial \lambda_1(\Omega_\varepsilon)}{\partial \varepsilon}\right|_ {\varepsilon=0} 
+  0(e^{- (\lambda_2 -\lambda_1)t})\right).
$$
This is  positive for large $t$ because $\lambda_1$ is nondegenerate and
$\frac{\partial \lambda_1(\Omega_\varepsilon)}{\partial \varepsilon}\big|_ {\varepsilon=0} < 0$  by 
\cite{HKK}.

\medskip
This completes the proof of \eqref{derivZ}.  
 The proof of {\eqref{derivzeta} and} \eqref{derivdet} relies on the  formulae \eqref{zeta} and \eqref{zetaprime} that give  for every 
$s\in \R^+$ and every $\varepsilon\ne 0$ sufficiently small,
\begin{equation}\label{zeta_epsilon}
  \zeta_{\Omega_\varepsilon} (s)= R(s)+
{1\over \Gamma(s)} 
\int_{0}^{1} \tilde Z_{\Omega_\varepsilon}(t) t^{s-1}dt + {1\over \Gamma(s)} 
\int_{1}^{\infty} Z_{\Omega_\varepsilon}(t) t^{s-1}dt
\end{equation}
and
\begin{equation}\label{zetaprime_epsilon}
\zeta'_{\Omega_\varepsilon} (0)= R'(0) + 
\int_{0}^{1} \tilde Z_{\Omega_\varepsilon}(t) t^{-1}dt + 
\int_{1}^{\infty} Z_{\Omega_\varepsilon}(t) t^{-1}dt ,
\end{equation}
with $\det (\Omega_\varepsilon ) = e^{-\zeta'_{\Omega_\varepsilon} (0)}$.
\end{proof} 

\begin{proof}[Proof of Theorem \ref{heart}] Let $D$ be a bounded convex domain of $\R^n$  
and let $r>0$ be less than the inradius of $D$.
Observe first that for every $t>0$,  the function ${\bf x}\mapsto Z_{\Omega({\bf x})}(t)$ is continuous on  $ \bar D_r=\{ {\bf x} \in D : {\rm dist}({\bf x}, \partial D) \ge r \} $. 
{Indeed, we know that 
$$\lambda_k(\Omega({\bf x}))\ge \frac {n}{n+2} C_n \left(\frac k{\vert\Omega({\bf x})\vert }\right)^{\frac2n}$$
(see \cite{LY}), where}
$C_n$ is  the constant appearing in Weyl's asymptotic formula and $\vert\Omega({\bf x})\vert$ is  the volume  of $\Omega({\bf x})$. Since $\vert\Omega({\bf x})\vert$ does not depend on ${\bf x}$, we deduce that the series $\sum e^{-\lambda_k(\Omega({\bf x})) t}$ converges uniformly on $ \bar D_r$ and that its sum $Z_{\Omega({\bf x})}(t)$ depends continuously on ${\bf x}$. (The continuity of eigenvalues $\lambda_k(\Omega({\bf x})) $ can be derived in several ways from standard continuity results cited in \cite[Section 2.3.3]{He}.  In particular, see Remark 6.2 of  \cite{Dan}.) Consequently, $Z_{\Omega({\bf x})}(t)$ achieves its extremal values in $  \bar D_r$. 

 \smallskip
Let ${\bf x}\in D_r =\{ {\bf x} \in D : {\rm dist}({\bf x}, \partial D) > r \} $   be a point such that ${\bf x} \notin \heartsuit(D)$. From the definition of $\heartsuit(D)$, 
there exists a hyperplane of
interior reflection $P$ of $D$ passing through ${\bf x}$.
Moreover, since the reflection of $\partial D_s\setminus P$ is disjoint from $\partial D_b$, there exists $\delta>0$ such that $\forall \varepsilon\in[0,\delta]$,  the hyperplane $P_\varepsilon$ parallel to $P$ and passing through ${\bf x_\varepsilon} = {\bf x}-\varepsilon V$ is a hyperplane of
interior reflection, where $V$ is the unit vector perpendicular to $P$ and pointing in the direction of $D_s$. Applying Proposition \ref{det}, we see that  the function $\varepsilon\mapsto Z_{\Omega({\bf x_\varepsilon})}(t) $ is monotonically nonincreasing 
(notice that the variation formula \eqref{derivZ} is given for a displacement into the small side $D_s$. Here,  the obstacle moves in the opposite direction, that of  $-V$, which has the effect of changing the sign of the derivative.) At the same time, the distance ${\rm dist}({\bf x}_\varepsilon,\heartsuit(D) ) $ is clearly decreasing since ${\bf x}_\varepsilon$ moves  into the big side. 
It follows that the set of points where ${\bf x}\mapsto Z_{\Omega({\bf x})}(t)$ achieves its minimum cannot be disjoint from  $\heartsuit(D)$.
The minimum is therefore achieved at a point  ${\bf x}_0(t)\in \heartsuit(D)$.

\smallskip
Similarly,  if a point ${\bf x}\in D_r $  does not belong to the interior of $\heartsuit(D)$, then there exists a hyperplane of
interior reflection passing through ${\bf x}$ so that it is possible to move  the obstacle  into the small side $D_s$ along a line segment perpendicular to $P$.
The function $Z_{\Omega({\bf x})}(t) $ is monotonically nondecreasing along such  displacement while the obstacle approaches the boundary of $D$. Again, this proves that if the set of points where ${\bf x}\mapsto Z_{\Omega({\bf x})}(t)$ achieves its maximum is not contained in the interior of   $\heartsuit(D)$, then it must hit $\{{\bf x}\in D\ :\  {\rm dist}({\bf x}, \partial D)=r\}$.

\medskip
  

\smallskip
The continuity of the zeta function and of the determinant in $D_r$ derive from the continuity of the heat trace, through  \eqref{zeta} and \eqref{zetaprime}, according to which, for every 
$s\in \R^+$ and every
${\bf x}\in D_r$,
\begin{equation}\label{zeta_x}
  \zeta_{\Omega({\bf x})} (s)= R(s)+
{1\over \Gamma(s)} 
\int_{0}^{1} \tilde Z_{{\Omega({\bf x})}}(t) t^{s-1}dt + {1\over \Gamma(s)} 
\int_{1}^{\infty} Z_{{\Omega({\bf x})}}(t) t^{s-1}dt,
\end{equation}
and
\begin{equation}\label{zetaprime_x}
\zeta'_{{\Omega({\bf x})}} (0)= R'(0) + 
\int_{0}^{1} \tilde Z_{{\Omega({\bf x})}}(t) t^{-1}dt + 
\int_{1}^{\infty} Z_{{\Omega({\bf x})}}(t) t^{-1}dt,
\end{equation}
where $R(s)$ is a function that does not depend on ${\bf x}$. 
These formulae are not necessarily valid in the situation where the ball $B({\bf x},r)$ touches the boundary of $D$ since, due to the cuspidal singularity that the domain $\Omega({\bf x})$ will then present, in consequence of which
the function  $\tilde Z_{\Omega({\bf x})}(t)/\sqrt  t $ may cease to be bounded in the neighborhood of $t=0$. 

\smallskip
Let ${\bf x}\in D_r $ be a point lying outside  $\heartsuit(D)$ and let $s\in\R^+$. As before, using the definition of  $\heartsuit(D)$ and Proposition \ref{det}, we see that it is possible to move $B({\bf x},r) $ towards the heart so as to decrease  $ \zeta_{\Omega({\bf x})} (s)$. This enables us to construct a (possibly finite) sequence of points converging to a point ${\bf y}\in \heartsuit(D)$, along which the zeta function is decreasing. Thus, $ \zeta_{\Omega({\bf x})} (s)<  \zeta_{\Omega({\bf y})} (s)$, which  leads to \eqref{supzeta}. Similarly, it is possible to move $B({\bf x},r) $ in the direction of the boundary so as to increase $ \zeta_{\Omega({\bf x})} (s)$, which implies that $ \zeta_{\Omega({\bf x})} (s)$ is less than a  limiting  value of $ \zeta_{\Omega({\bf y})} (s)$ as $B({\bf y},r) $ approaches the boundary of $D$.\\ 
Now, when $r< {\rm dist}(\heartsuit(D),\partial D)$, the heart is  contained in $D_r$ and, consequently, ${\bf x}\mapsto \zeta_{\Omega({\bf x})} (s)$ is continuous on $\heartsuit(D)$ (which is a compact set) and achieves its minimum at a point ${\bf x}_0 (s)\in \heartsuit(D)$. Similarly, ${\bf x}\in\heartsuit(D) \mapsto \zeta_{\Omega({\bf x})} (s)$ achieves its maximum at a point which belongs to  the interior of $\heartsuit(D)$, since a ball of radius $r< {\rm dist}(\heartsuit(D),\partial D)$ centered at the boundary of $\heartsuit(D)$ can be moved away from $\heartsuit(D)$  so as to increase $\zeta$. 

\smallskip
The statement concerning the determinant can be proved using the same arguments.

\end{proof} 

\bigskip
{\bf Acknowledgments}  The authors gratefully 
note that much of this work was done
at the Centro de Giorgi in Pisa and while the second author was a 
visiting professor at the
Universit\'e Fran\c{c}ois Rabelais, Tours, France.
We also wish to thank the referee for pertinent comments.



\def\cprime{$'$}

\end{document}